\documentclass[12pt]{article}
\usepackage[english]{babel}
\usepackage{amsmath,amsthm,amsfonts,amssymb,epsfig,hyperref}
\usepackage[left=1in,top=1in,right=1in]{geometry}


\newtheorem{thm}{Theorem}[section]

\newtheorem*{question}{Question}



\title{Recent work on chemical distance in critical percolation}
\author{Michael Damron \thanks{The research of M. D. is supported by NSF grant DMS-0901534.} \\ \small{Georgia Tech} \\ \small{Indiana University, Bloomington}}

\begin{document}

\maketitle

\abstract{In this note, we describe some of the progress recently made on questions regarding the chemical distance in two-dimensional critical percolation by the author, J. Hanson, and P. Sosoe \cite{chem1, chem2}. It is expected that the distance between points in critical percolation clusters scales as $\|\cdot \|^{1+s}$, where $\|\cdot \|$ is the Euclidean distance and $s>0$. First, we review previous work of Aizenman-Burchard and Morrow-Zhang, which together establish a version of $0 < s \leq 1/3$. The main results of our work are in the direction of proving upper bounds on $s$, answering in \cite{chem1} a question from '93 of Kesten-Zhang on the ratio of the length of the shortest crossing of a box to the length of the lowest crossing of a box. The paper \cite{chem2} provides a quantitative version of the result of \cite{chem1}, along with bounds on point-to-point and point-to-set distances.}

\section{Background}

\subsection{The model}

Percolation was introduced by Broadbent and Hammersley and is one of the simplest models displaying critical phenomena. Each edge of the nearest-neighbor lattice $(\mathbb{Z}^d, \mathcal{E}^d)$ is declared either open (occupied) with probability $p \in [0,1]$ or closed (vacant) with with probability $1-p$, and the states of different edges are assumed to be independent. One can think of an open edge as one which allows fluid to flow through it (it is unblocked) and a closed edge as one which does not. The main objects of study are long, system-spanning open connections that occur when $p$ is above a critical threshold. (See the classic text \cite{Grimmett} for an introduction to the subject.)

To be concrete, let $B_n = [-n,n]^d$ be the box of side-length $2n$ centered at the origin. One says that this box has a \emph{left-right open crossing} if there is an open path (sequence of edges $e_1, e_2, \ldots, e_m$ such that each $e_i$ shares an endpoint with $e_{i+1}$ and all $e_i$'s are open) all of whose vertices are in $B_n$ and which connects the left side $\{-n\} \times [-n,n]^{d-1}$ to the right side $\{n\} \times [-n,n]^{d-1}$. Write $H_n$ for the event that there is such a crossing. Then one can show that there is a critical value $p_c = p_c(d)$, which is in the open interval $(0,1)$ for $d \geq 2$, such that
\[
\mathbb{P}_p(H_n) \to \begin{cases}
1 & \text{ if } p > p_c \\
0 & \text{ if } p < p_c
\end{cases}.
\]
Here, $\mathbb{P}_p$ is the measure on configurations of open/closed edges corresponding to parameter $p \in [0,1]$. In fact, the convergence above occurs exponentially quickly. This is not the standard way to define $p_c$, but it is an equivalent way.

The question of what happens for $p=p_c$ is considerably more difficult and, in many cases, unsolved. For $d=2$, it is a well-known result of Kesten that $p_c=1/2$ and, by duality arguments, $\mathbb{P}_{1/2}(H_n) \to 1/2$. For general dimensions $d\geq 3$, it is not known if $\mathbb{P}_{p_c}(H_n)$ is bounded away from 0 or 1, although Aizenman \cite{A97} has shown that a related ``thin brick'' crossing probability is bounded away from 0.

\subsection{Previous results on chemical distance}

The chemical distance $dist_{chem}(x,y)$ between two vertices in a percolation configuration is defined as the length of the shortest open path from $x$ to $y$. If there is no open path, the distance is infinite. To begin with, and to eventually describe the results of \cite{chem1}, we consider the distance from one side of a box to another. So on the event $H_n$ described in the last subsection, we define $S_n$ to be the minimal number of edges in any left-right open crossing of $B_n$.

For $d \geq 2$ and $p>p_c$, the order of $S_n$ can be found using block arguments, leading to $S_n \leq Cn$ with high probability. In fact, one can even derive exponential bounds for upper large deviations on scale $n$. These statements follow from straightforward adaptations of the work of Grimmett-Marstrand \cite{GM90} in '90 and Antal-Pisztora \cite{AP96} in '96. Further and more recent work of Garet-Marchand \cite{GM07} in '07 even allows one to show that $S_n/n$ converges to a constant almost surely.

\[
\text{From now on, we restrict to } d=2 \text{ and } p = p_c = 1/2.
\]
To get a feeling for the behavior of crossings at $p=p_c$, we can imagine moving $p$ from $1$ down to $p_c$. (This requires us to couple all models together with a standard coupling.) For $p$ close to 1, most edges are open, and one can use simple oriented percolation techniques to show $S_n \leq 5n$ with high probability. In particular, we would imagine there are many nearly straight open paths that connect the left and right sides of $B_n$. As $p$ lowers, many of these open edges become closed. At $p_c$, open crossings barely connect both sides, and must avoid so many closed edges that one expects that in a suitable sense
\[
S_n \sim n^{1+s} \text{ for some } s>0.
\]
The main question now is to determine this exponent $s$. As remarked in O. Schramm's ICM paper \cite{S06}, this ``chemical distance exponent'' is believed not to be related to those obtainable by SLE methods, so there are few ideas about how to obtain it rigorously. There are not even many non-rigorous arguments, but numerical results \cite{HS88} suggest that $s \sim .13 \ldots$.

\medskip
\noindent
{\bf Lower bounds on $s$.} The first rigorous result toward bounds on $s$ in '93 was not actually for the shortest crossing, but for the \emph{lowest} crossing. Any self-avoiding path in the box $B_n$ which starts on the left side and ends on the right (and touches each only once) splits the box into two connected components: an upper and a lower. The lowest open crossing in a percolation configuration is defined as the open such crossing whose lower component is minimal. On $H_n$, let $L_n$ be the length of the lowest open crossing.
\begin{thm}[Kesten-Zhang]
There exists $\alpha>0$ such that
\[
\mathbb{P}(0 < L_n \leq n^{1+\alpha}) \to 0.
\]
\end{thm}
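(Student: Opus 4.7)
The plan is to establish the lower bound by a multi-scale renormalization argument that forces the lowest crossing (call it $\gamma_n$) to accumulate polynomial excess length at many spatial scales. First I would exploit the geometric characterization that an edge $e=\{u,v\}$ lies on $\gamma_n$ if and only if $e$ is open, there is a closed dual path from the dual edge associated to $e$ down to the bottom side of $B_n$, and $u,v$ are connected to the left and right sides of $B_n$ by open primal paths lying above this dual barrier. This decouples ``being on the lowest crossing'' into three arm events: one closed dual arm downward, and two open primal arms sideways. However, a naive first moment via arm probabilities is not sharp enough: using Kesten's universal half-plane three-arm exponent (which equals $2$) together with FKG lower bounds on two-arm probabilities only recovers $\mathbb{E}[L_n\mathbf{1}_{H_n}] \geq cn$, so any polynomial improvement must exploit correlations between ``three-arm points'' along $\gamma_n$ rather than summing single-point probabilities.

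Next I would set up a block/renormalization scheme at a fixed intermediate scale $m=n^{\beta}$ for some $\beta\in(0,1)$. Tile $B_n$ by sub-boxes of side $m$ and for each sub-box $S$ visited by $\gamma_n$ study the length of $\gamma_n\cap S$. Using RSW and standard gluing constructions, I would argue that, conditional on suitable boundary events that force $\gamma_n$ to enter and exit $S$ through a prescribed side while certifying that it remains the lowest crossing of $B_n$, the conditional distribution of $|\gamma_n\cap S|$ stochastically dominates (up to constants) the length of a lowest crossing of a box of side $m$. This reduces the statement at scale $n$ to a version of itself at scale $m$, at the cost of only a bounded multiplicative constant in conditional probability per sub-box.

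The crux of the argument is a uniform \emph{per-scale gain}: one must produce $\delta>0$, independent of $m$, such that with probability bounded away from $0$ (uniformly in $m$) the arc $\gamma_n\cap S$ has length at least $(1+\delta)m$. The intended mechanism is that with uniformly positive probability RSW manufactures, inside $S$, a closed dual cluster of linear scale $m$ that reaches nearly to the bottom of $S$, and $\gamma_n$ must detour around it, inflating its length multiplicatively beyond the trivial lower bound $m$. Iterating such gains over the $\log n/\log m$ available scales composes into an excess of the form $n\cdot(1+\delta)^{\log n/\log m}=n^{1+\alpha}$ for an explicit $\alpha=\alpha(\beta,\delta)>0$, and then one converts the positive-probability gain per scale into a high-probability statement by exploiting the approximate independence between well-separated sub-boxes. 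The principal obstacle I expect is isolating and rigorously proving this per-scale gain: one has to condition on the complicated event that $\gamma_n$ actually visits $S$ without destroying the RSW freedom needed inside $S$ to force the detour, and one has to ensure that the multiplicative gains across scales do not get swamped by correlations between sub-boxes. Making these decoupling and detour estimates quantitative, and then showing that the scale-by-scale factors concentrate into a polynomial lower bound on $L_n$, forms the technical heart of the argument.
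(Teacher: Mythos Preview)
The paper does not supply its own proof of this theorem; it is quoted as a result of Kesten--Zhang \cite{KZ92}, and the only methodological hint the paper gives is its description of the later Aizenman--Burchard argument for the stronger $S_n$ statement: with high probability every large-diameter open path is forced, on many dyadic scales, through thin rectangles containing closed blocking dual paths, and these ``irregularities on many scales'' compound into a polynomial excess length. Your plan is very much in that spirit, and your identification of the real difficulty---carrying out the RSW detour construction \emph{after} conditioning on $\gamma_n$ visiting a given sub-box without destroying the independence needed inside it---is exactly right.

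There is, however, a genuine arithmetic gap in the scheme as written. You take $m=n^\beta$ and then claim ``$\log n/\log m$ available scales'' compounding to $n\cdot(1+\delta)^{\log n/\log m}=n^{1+\alpha}$. But with $m=n^\beta$ one has $\log n/\log m=1/\beta$, a constant independent of $n$, so the right-hand side is only $(1+\delta)^{1/\beta}\,n$, a constant multiple of $n$ and not $n^{1+\alpha}$. A single-scale block decomposition cannot by itself produce a power of $n$; what makes the Aizenman--Burchard mechanism work is that one uses an entire \emph{hierarchy} of scales (dyadic, or a fixed large constant $m$ independent of $n$, giving $\sim\log n$ levels), and the $(1+\delta)$ gain is harvested once per level. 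With $m$ a fixed large constant you do get $(1+\delta)^{\log n/\log m}=n^{\log(1+\delta)/\log m}$, which is the desired $n^\alpha$. So either change $m=n^\beta$ to $m$ fixed, or switch to explicitly dyadic scales $2^k$, $k\le\log_2 n$; the rest of your outline---per-scale RSW detour, stochastic domination of $|\gamma_n\cap S|$ by a lowest crossing at the smaller scale, and decoupling of well-separated boxes---then lines up with the approach the paper sketches.
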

In this same paper, the following question was posed. It remained open until last year, and was solved by the author, Hanson, and Sosoe in \cite{chem1}.
\begin{question}[Kesten-Zhang]
Conditional on $H_n$, does $S_n/L_n \to 0$ in probability?
\end{question}

The Kesten-Zhang result does not directly address $S_n$. The first result which did was due to Aizenman-Burchard \cite{AB99} in '99. The following theorem is a direct application of their methods to bound the fractal dimension of systems of random curves. Note that it implies the Kesten-Zhang result.
\begin{thm}[Aizenman-Burchard]
There exists $\beta>0$ such that
\[
\mathbb{P}( 0 < S_n \leq n^{1+\beta} ) \to 0.
\]
\end{thm}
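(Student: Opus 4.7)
The strategy is to convert a lower bound on the ``box-counting dimension'' of the shortest crossing $\gamma$ into a lower bound on its edge-length. For any $r \in [1,n]$, if $N(r)$ denotes the number of sub-boxes of side $r$ in a fixed dyadic partition of $B_n$ that are intersected by $\gamma$, then trivially $S_n \geq N(r)$, so it suffices to show $N(r) \geq (n/r)^{1+\beta}$ with probability tending to $1$ for some $\beta > 0$ and a suitable $r$; taking $r$ bounded then yields the theorem.

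The key probabilistic input is the polychromatic $k$-arm exponent $\alpha_k$ at $p_c = 1/2$ in $\mathbb{Z}^2$, which tends to infinity with $k$; I would fix $k_0$ with $\alpha_{k_0} > 2$ (for example $k_0 = 6$, since $\alpha_5 = 2$ by Kesten's ``universal 5-arm'' argument and the polychromatic exponents are strictly increasing in $k$). Partition $B_n$ into sub-boxes of side $r = n^{\rho}$, and call a sub-box $B'$ \emph{heavy} if $\gamma$ enters and exits it more than $k_0/2$ times. Edge-disjointness of the sub-paths of $\gamma$ inside a heavy $B'$, together with the fact that $\gamma$ reaches the two sides of $B_n$, produces $k_0$ disjoint open arms from $\partial B'$ out to $\partial B_n$; hence the probability that a given $B'$ is heavy is at most $C(r/n)^{\alpha_{k_0}}$. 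A union bound over the $(n/r)^2$ sub-boxes then shows that, for $\rho$ sufficiently close to $1$, with probability tending to $1$ no sub-box at scale $r$ is heavy.

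To upgrade ``no heavy sub-box'' into the required lower bound on $N(r)$, one runs the heavy-sub-box estimate simultaneously at all dyadic scales $r_j = 2^{-j} n$, combined with an RSW-type tortuousness input: at each scale $r_j$, any open sub-crossing of a sub-box of side $r_j$ that $\gamma$ is forced to execute has length at least $(1+\delta)\, r_j$ for a scale-independent constant $\delta > 0$, the $(1+\delta)$ excess coming from the positive-probability event that a closed dual crossing of a smaller sub-box obstructs a straight traversal. Iterating this multiplicative excess across $\Theta(\log n)$ well-separated scales gives $S_n \geq n\,(1+\delta)^{c \log_2 n} = n^{1+\beta}$ with probability tending to $1$, which is exactly the statement $\mathbb{P}(0 < S_n \leq n^{1+\beta}) \to 0$.

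The main obstacle is producing the scale-invariant constant $\delta > 0$ in the inductive step: at a single scale, the tortuousness bound is immediate from RSW, but implementing it multi-scale requires decoupling non-adjacent scales via FKG-type arguments or a suitable exploration-process coupling, so that the positive-probability events at different scales may be multiplied without prohibitive loss of constants. Combined with the $k_0$-arm union-bound input above, this decoupling is the technical heart of the Aizenman--Burchard approach in this setting, and it is the step where one genuinely uses that the polychromatic arm exponents decay faster than the ambient dimension $2$.
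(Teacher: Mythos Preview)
Your third paragraph captures the mechanism the paper describes: at each of $\Theta(\log n)$ dyadic scales, a positive density of sub-boxes contain closed dual paths in thin rectangles that obstruct a straight traversal, forcing any open crossing of that sub-box to pick up a scale-independent multiplicative excess $(1+\delta)$; compounding over scales yields $S_n \geq n^{1+\beta}$. This is exactly the idea the paper attributes to Aizenman--Burchard, and the decoupling across scales that you flag as the main obstacle is indeed where the work lies.

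Your second paragraph, however, contains errors and is not the correct input for this direction. First, if the shortest crossing $\gamma$ enters and exits a sub-box $B'$ many times, the resulting disjoint sub-paths are all \emph{open}, so the relevant quantity is a monochromatic arm probability, not a polychromatic one; your appeal to $\alpha_5 = 2$ (which concerns alternating arms) is misplaced. Second, those excursions need not reach $\partial B_n$ at all: between two visits to $B'$ the path may stay arbitrarily close to $B'$, so there are in general no ``$k_0$ disjoint open arms from $\partial B'$ out to $\partial B_n$''. The correct object is the number of disjoint open crossings of an annulus such as $2B' \setminus B'$.

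More to the point, the $k$-arm input you invoke corresponds to Aizenman--Burchard's Hypothesis~H1, which they use for the \emph{upper} bound on the dimension of the curve system (H\"older regularity, tightness of limits). The \emph{lower} bound on length asserted in the theorem comes instead from their straight-runs hypothesis: the probability of an open crossing of a rectangle of large aspect ratio in the long direction is small, which for critical percolation follows from RSW and duality. Your closing claim --- that the argument ``genuinely uses that the polychromatic arm exponents decay faster than the ambient dimension $2$'' --- therefore misidentifies the key hypothesis. Drop the heavy-box paragraph, keep the multiscale obstruction argument with the straight-runs input, and your sketch aligns with the paper.
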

The Aizenman-Burchard proof is remarkable in that it applies to any statistical mechanical models which satisfy some rather weak assumptions. For instance, it was applied again in \cite{ABNW99} to deduce a similar theorem for the minimal spanning tree and the uniform spanning tree. The main idea was to show that with high probability, all open paths in $B_n$ which have diameter of order $n$ must have irregularities on many scales. These irregularities occur in small thin rectangles which contain closed blocking paths. This theorem remains the best lower bound for $s$, providing a version of $s>0$. It was later adapted to near-critical percolation by Pisztora \cite{PUnpublished}, who obtained exponential convergence to zero, and this adaptation was a main tool in the short proof of a quenched version of Kesten's subdiffusive bound for random walk on the incipient infinite cluster (and invasion percolation cluster) by the author, Hanson, and Sosoe in '12 \cite{DHS12}.

\medskip
\noindent
{\bf Upper bounds on s.} The easiest way to give an upper bound on $s$ is to find a left-right open crossing of $B_n$ whose length $L$ one can estimate. Since $S_n$ is the minimal length of all such paths, one has $S_n \leq L$. We have already discussed one candidate path, the lowest crossing, and finding the order of the length of this crossing is what Morrow-Zhang \cite{MZ05} did in '05:
\begin{thm}[Morrow-Zhang]
For each $k \geq 1$, there exists $C_k>0$ such that
\[
C_k^{-1} (n^2 \pi_3(n))^k \leq \mathbb{E}L_n^k \leq C_k (n^2 \pi_3(n)) \text{ for all }n.
\]
\end{thm}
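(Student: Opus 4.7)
The plan is to compute $\mathbb{E} L_n^k$ via the edge-level expansion
\[
\mathbb{E} L_n^k = \sum_{(e_1, \dots, e_k) \in B_n^k} \mathbb{P}(e_1, \dots, e_k \in \gamma_n),
\]
where $\gamma_n$ denotes the lowest open left-right crossing of $B_n$, and then produce matching upper and lower bounds on the joint probability in terms of three-arm probabilities. The structural input I would establish first is the local characterization of the lowest crossing: an edge $e$ lies on $\gamma_n$ iff $e$ is open and there exist three disjoint arms from $e$, namely an open arm from one endpoint to the left side of $B_n$, an open arm from the other endpoint to the right side, and a closed dual arm from just below $e$ down to the bottom of $B_n$. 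The dual arm certifies that no open crossing strictly below $e$ exists, so that $e$ sits on the \emph{lowest} crossing rather than on some arbitrary open crossing.

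\medskip

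For $k=1$, combine RSW with Kesten's quasi-multiplicativity to prove $\mathbb{P}(e \in \gamma_n) \asymp \pi_3(n)$ uniformly for $e$ in a bulk region (at Euclidean distance at least $\epsilon n$ from every side of $B_n$). Edges closer to the boundary are handled by a straightforward interpolation using shorter-distance arm probabilities, and contribute a comparable amount. Summing over $e \in B_n$ then yields $\mathbb{E} L_n \asymp n^2 \pi_3(n)$.

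\medskip

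For general $k$ the heart of the argument is the matching two-sided bound on the joint probability, organized hierarchically. For each $e_i$, let $\rho_i$ be the minimum of the distance from $e_i$ to any other $e_j$ and to the boundary of $B_n$. The three-arm events around distinct $e_i$ at scale $\rho_i$ occur in pairwise disjoint annuli and are therefore independent, contributing $\prod_i \pi_3(\rho_i)$. Beyond those scales the arms must be chained together to reach the appropriate boundary segments; RSW plus quasi-multiplicativity express the outer chaining cost in terms of three-arm probabilities at the larger scales. Combining a BK/Reimer decoupling on the open-arm pieces with duality on the closed dual pieces produces an upper bound on the joint probability by a product of three-arm probabilities at the relevant hierarchical scales. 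Summing over $k$-tuples in $B_n$ and repeatedly using the layer-cake identity $\sum_{e \in B_n} \pi_3(|e - e_i|) \asymp n^2 \pi_3(n)$ (a consequence of quasi-multiplicativity and the polynomial decay of $\pi_3$ with exponent strictly less than $2$) yields $C_k (n^2 \pi_3(n))^k$. The matching lower bound comes from fixing $k$ well-separated bulk locations, constructing local three-arm events via FKG, and gluing them into a single low crossing using RSW; summing over such configurations recovers $c_k (n^2 \pi_3(n))^k$.

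\medskip

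The main obstacle is the upper bound for $k \geq 2$. The three-arm events at distinct $e_i$ are not disjoint-occurrence events in the literal BK sense once scales become large compared to the mutual separations, because the open arms to the sides and the closed dual arms to the bottom may reuse common edges. A clean decoupling therefore requires combining BK/Reimer on the open side, duality on the closed side, and quasi-multiplicativity of three-arm probabilities to absorb the chaining cost without losing the sharp exponent. Edges near the boundary of $B_n$, where the relevant arm exponent shifts to the half-plane regime, add further bookkeeping but contribute only lower-order terms to the final sums.
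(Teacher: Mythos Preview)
The paper does not actually prove this theorem; it is stated as a result of Morrow--Zhang \cite{MZ05} and cited without proof. The only argument the paper supplies is the one-paragraph heuristic immediately following the statement, explaining why points on the lowest crossing must have three arms (two open arms to the sides of $B_n$ and one closed dual arm to the bottom). That paragraph is precisely your ``structural input,'' so on that point you and the paper agree. Beyond this, there is nothing in the paper to compare your proposal against.

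Your outline is in fact the standard route to such moment bounds (first-moment via the three-arm characterization plus RSW/quasi-multiplicativity; higher moments via a hierarchical decoupling organized by mutual separations). Two small remarks. First, the statement as printed in the paper has a typographical slip: the upper bound should read $C_k (n^2 \pi_3(n))^k$, not $C_k (n^2 \pi_3(n))$; your argument correctly targets the $k$-th power. Second, the decoupling you describe for the upper bound is really an instance of Kesten's arm-separation technology rather than BK/Reimer per se: once the three arms around each $e_i$ are made well-separated at scale $\rho_i$, conditional independence across disjoint annuli plus quasi-multiplicativity does the work, and no literal BK step on overlapping arms is needed. Framing it that way would sharpen the sketch and avoid the worry you flag about arms reusing edges.
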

The quantity $\pi_3(n)$ in the statement of their result is the probability of the so-called \emph{(polychromatic) three-arm event}. To describe it, we need to consider the dual lattice, defined as the original square lattice shifted by the vector $(1/2,1/2)$. Given a percolation configuration on $(\mathbb{Z}^2, \mathcal{E}^2)$, we define the dual configuration as follows. Each $e \in \mathcal{E}^2$ has a unique dual edge $e^*$ which bisects it. If $e$ is open in the original percolation configuration, then we set $e^*$ to be open in the dual configuration; otherwise, we set $e^*$ to be closed. The three-arm event to distance $n$ is the event that there are two disjoint open paths (``arms'') connecting $0$ to $\partial [-n,n]^2$ and one closed dual path connecting a dual neighbor of $0$ to $\partial [-n,n]^2$. The quantity $n^2 \pi_3(n)$ is of the same order as the expected number of points in $B_n$ which have three arms to distance $n$.

What is the significance of the three-arm probability? The three-arm event characterizes points on the lowest crossing. Each point $v$ on the lowest crossing $l_n$ has one open arm to the right side of $B_n$ and a disjoint open arm to the left side. Furthermore, no open crossing can use a vertex strictly below $l_n$. In particular, there is no open arc from one side of $v$ to the other below $l_n$. By duality, then, there must be a closed dual path from a dual neighbor of $v$ to the bottom of $B_n$. Thus $v$ has ``three arms.''

The above theorem was actually proved in a related model: site percolation on the triangular lattice, in which vertices (instead of edges) are labeled open or closed. For that model, it is known \cite{SW01} that $\pi_3(n) = n^{-\frac{2}{3} + o(1)}$, so one concludes that $\mathbb{E}L_n \leq C n^{\frac{4}{3} + o(1)}$. Combined with the Aizenman-Burchard result, this establishes a version of
\begin{equation}\label{eq: s_so_far}
 0 < s \leq 1/3,
\end{equation}
which we note is consistent with the numerical value $s \sim .13 \ldots$.

\section{Recent work on the chemical distance}

\subsection{Box crossings}

The next step beyond the inequalities \eqref{eq: s_so_far} is to establish whether the top inequality is strict. This seems to be a difficult question, so here we present partial results. The first is a positive answer from \cite{chem1} to the '93 question of Kesten-Zhang.
\begin{thm}[Damron-Hanson-Sosoe]
Let $d=2$ and $p=1/2$. Conditional on $H_n$, one has $\frac{S_n}{L_n} \to 0$ in probability.
\end{thm}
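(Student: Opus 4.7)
First, I would reduce the theorem to a statement about the minimal crossing length. From the Morrow-Zhang bounds---the $k=1$ and $k=2$ cases---a Paley-Zygmund argument gives $\prob(L_n \geq c \, n^2 \pi_3(n)) \to 1$ for some $c>0$. Hence to prove $S_n/L_n \to 0$ in probability, it suffices to exhibit, with probability tending to one, an open left-right crossing of $B_n$ of length at most $\e \cdot n^2 \pi_3(n)$ for every fixed $\e > 0$.

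The natural plan is to modify the lowest crossing $\ell_n$ by splicing in local shortcuts. The lowest crossing is long only because it is forced to make upward detours around closed dual arcs running from $\ell_n$ to the bottom of $B_n$; a different crossing, lying higher, may avoid some of these detours. I would pick a mesoscopic scale $k=k(n)$ with $k\to\infty$ and $k=o(n)$, and tile a thin horizontal strip containing $\ell_n$ by $k$-boxes. In each such box, the strategy is to replace the portion of $\ell_n$ inside it by a shorter open arc entering on the left and exiting on the right.

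The technical heart is a \emph{shortcut lemma}: with probability bounded below uniformly in $k$, a $k$-box admits an alternative open left-right arc, gluable to $\ell_n$ on both sides (using RSW/arm-separation), whose length is at most $k^2 \pi_3(k) / f(k)$ for some $f(k) \to \infty$. Given the lemma, standard quasi-independence between well-separated $k$-boxes, combined with Chebyshev-type concentration on the number of ``good'' boxes, shows that a positive density of the boxes along $\ell_n$ admit shortcuts with probability tending to one. Replacing $\ell_n$ by the shortcut in each good box and leaving it alone elsewhere produces a left-right open crossing of $B_n$ of expected length at most $(1-c_0)\E L_n + c_0 \E L_n /f(k)$, which is $o(\E L_n)$, as required.

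The hard part is the shortcut lemma, and the obstacle is one of circularity: in a ``typical'' $k$-box, any open crossing---including a candidate shortcut---has expected length of order $k^2 \pi_3(k)$, so on average there is no saving at all. The idea that breaks the impasse is not to sample a generic $k$-box, but to condition on the presence of a specific macroscopic closed dual excursion that forces $\ell_n$ into a long detour, and then to look for an alternative open arc in a strictly higher sub-region where that particular dual arc is absent. Making this conditioning precise, controlling the length of the alternative uniformly over the finer scales, and producing a divergent $f(k)$ are the main technical difficulties; this will likely require arm-separation, quasi-multiplicativity of the three-arm probability, and a multi-scale iteration of the construction.
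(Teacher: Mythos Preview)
Your overall shape is right---modify $\ell_n$ by splicing in local shortcuts---but there is a genuine arithmetic gap in the step you flag as routine. If the shortcut lemma only gives a shortcut in each $k$-box with probability bounded below (say $\geq c_0$), then quasi-independence and concentration give you only that a \emph{positive fraction} $c_0$ of boxes are good. Replacing $\ell_n$ in the good boxes and leaving it alone elsewhere yields a crossing of expected length of order $(1-c_0)\,\mathbb{E}L_n + c_0\,\mathbb{E}L_n/f(k)$, and this is \emph{not} $o(\mathbb{E}L_n)$: the undetoured $(1-c_0)$ fraction still contributes a constant multiple of $\mathbb{E}L_n$. Iterating the construction does not obviously help, since after one pass the surviving path is no longer the lowest crossing and you lose the three-arm characterization that drives the length estimates.

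The paper closes this gap by working vertex-by-vertex rather than box-by-box and by proving a strictly stronger pointwise statement. It defines an $\epsilon$-\emph{shielded detour} around $v\in\ell_n$: an open arc $\gamma$ with endpoints on $\ell_n$, interior strictly above $\ell_n$, length at most $\epsilon$ times the detoured portion of $\ell_n$, and---crucially---a closed dual arc above $\ell_n\cup\gamma$ joining the endpoints of $\gamma$ (the ``shield''). The shield forces distinct detours to be disjoint, so a maximal family $\Pi$ patches into a genuine crossing. The key estimate is then
\[
\sup_{v}\ \mathbb{P}\bigl(\text{$v$ lies in no $\epsilon$-shielded detour}\ \big|\ v\in\ell_n\bigr)\ \longrightarrow\ 0,
\]
i.e.\ each vertex is detoured with probability tending to one, not merely bounded below. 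Plugging this into $\mathbb{E}[S_n\mid H_n]\leq \epsilon\,\mathbb{E}[L_n\mid H_n]+\sum_v \mathbb{P}(F_v\mid v\in\ell_n)\,\mathbb{P}(v\in\ell_n)$ makes the residual sum $o(\mathbb{E}L_n)$, which is exactly what your positive-density argument fails to deliver. Your intuition in the last paragraph---condition on a specific closed dual excursion below $v$ and look for an open arc in a higher region where that obstruction is absent---is indeed the mechanism behind the pointwise estimate, but the target has to be ``with conditional probability $\to 1$,'' not ``with probability $\geq c_0$.''
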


The method of proof involves the notion of an $\epsilon$-shielded detour around a portion of the lowest crossing $l_n$ of the box $B_n$. An open path $\gamma$ (with vertices $v_0, \ldots, v_m$) in $B_n$ is said to be an $\epsilon$-shielded detour if the following conditions hold:
\begin{enumerate}
\item $v_0, v_m \in l_n$, but $v_1, \ldots, v_{m-1}$ lie in the region strictly above $l_n$,
\item there is a closed dual path connecting a dual vertex adjacent to $v_0$ to one adjacent to $v_m$ in the region above $l_n \cup \gamma$ and 
\item the length of $\gamma$ is at most $\epsilon$ times the length of the portion of $l_n$ from $v_0$ to $v_m$ (the detoured portion of $l_n$).
\end{enumerate}
Given the collection of all $\epsilon$-shielded detours, one chooses a maximal subcollection $\Pi$ of them in the sense that the total length of $l_n$ which they detour is as large as possible. (The existence of shielding paths in item 2 ensures that these detour paths are disjoint.) Then one can show that the union of the paths in $\Pi$ with the portions of $l_n$ not in any of the detoured paths forms an open crossing $\sigma$ of $B_n$. Last, each vertex of $l_n$ which is not detoured by a path from $\Pi$ can be shown not to have any $\epsilon$-shielded detour around it (not only one from $\Pi$). These observations imply
\[
\mathbb{E}[S_n \mid H_n] \leq \mathbb{E}[ \#\sigma \mid H_n] \leq \epsilon \mathbb{E}[L_n \mid H_n] + \sum_{v \in B_n} \mathbb{P}(F_v \mid v \in l_n) \mathbb{P}(v \in L_n),
\]
where $F_v$ is the event that $v$ is not on the detoured path of \emph{any} $\epsilon$-shielded detour. The main difficulty in the proof then is to show that
\[
\lim_n \sup_{v \in B_n} \mathbb{P}(F_v \mid v \in l_n) = 0.
\]
(Actually, this is proved only for $v$ sufficiently far away from the boundary of $B_n$.) Plugging this estimate back into the above shows that
\[
\limsup_n \frac{\mathbb{E}[S_n \mid H_n]}{\mathbb{E}[L_n \mid H_n]} \leq \epsilon + \lim_n \frac{\sup_{v \in B_n} \mathbb{P}(F_v \mid v \in l_n) \mathbb{P}(v \in l_n)}{\mathbb{E}[L_n \mid H_n]} \leq \epsilon.
\]

The above proof gives no hint at the rate of convergence of $S_n/L_n$ to zero. In other words one could not extract an explicit function $a(n) \to 0$ such that $(S_na(n))/L_n \to 0$ in probability. In recent work \cite{chem2}, however, we obtain a quantitative estimate, using a modified ``detour event."
\begin{thm}[Damron-Hanson-Sosoe]
For any $c_1 \in (0,1/4)$, one has
\[
\mathbb{E}S_n \leq \frac{\mathbb{E}L_n}{(\log n)^{c_1}} \text{ for all large }n.
\]
Furthermore,
\[
\mathbb{P}\left( S_n \geq \frac{L_n}{(\log n)^{c_1}} \mid H_n \right) \to 0.
\]
\end{thm}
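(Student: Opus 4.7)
The plan is to run the argument from the qualitative theorem with a scale-dependent choice $\epsilon_n = (\log n)^{-c_1}$ in place of a fixed $\epsilon$, and to sharpen the main input into a quantitative no-detour bound. With $F_v^{(n)}$ denoting the event that $v$ is not detoured by any $\epsilon_n$-shielded path, the same decomposition as in the qualitative proof yields
\[
\mathbb{E}[S_n \mid H_n] \leq \epsilon_n\,\mathbb{E}[L_n\mid H_n] + \sup_{v\in B_n} \mathbb{P}(F_v^{(n)}\mid v\in l_n)\,\mathbb{E}[L_n\mid H_n],
\]
so the first conclusion reduces to showing $\sup_v \mathbb{P}(F_v^{(n)}\mid v\in l_n)\leq C(\log n)^{-c_1}$. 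Once this is in hand, the conditional-probability statement follows from Markov's inequality, using the Morrow--Zhang lower bound $L_n \geq c n^2 \pi_3(n)$ (valid with high probability on $H_n$) to remove the random denominator; the small logarithmic factor lost in this substitution is the reason $c_1$ must be strictly less than $1/4$ rather than allowed to equal it.

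For the quantitative no-detour bound I would resolve the detour event dyadically. For each scale $r_k = 2^k$ with $1\leq k \leq \log_2 n$, let $D_k(v)$ be the event that there is an $\epsilon_n$-shielded detour around $v$ whose endpoints on $l_n$ sit at distance of order $r_k$ from $v$ and whose open path together with its closed dual shield are built inside the annulus of radii $r_k$ and $2r_k$ about $v$. Then $F_v^{(n)} \subset \bigcap_k D_k(v)^c$, and the aim is to prove a per-scale lower bound
\[
\mathbb{P}(D_k(v)\mid v\in l_n) \geq c/k,
\]
with $c$ made arbitrarily close to $1/4$, together with approximate independence of the $D_k(v)$ across well-separated scales. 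The per-scale bound is proved by conditioning on the three-arm event implied by $v\in l_n$ and using RSW, FKG, and the Kesten--Nolin separation-of-arms estimates to plant the required open detour arc and closed dual shield inside the annulus in a way that is compatible with the pre-existing arms; the factor $1/k$ arises from the arm-exponent cost of matching the new arms onto the quenched arm structure, and tracking this cost sharply is what produces the constant $1/4$. Approximate independence across scales follows from quasi-multiplicativity of arm events in disjoint annuli, so that the probabilities essentially multiply, giving
\[
\mathbb{P}(F_v^{(n)}\mid v\in l_n) \leq \prod_{k=1}^{\log_2 n}\bigl(1-c/k\bigr) \leq \exp\!\Bigl(-c\sum_{k=1}^{\log_2 n}\tfrac{1}{k}\Bigr) \leq C(\log n)^{-c}.
\]

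The main obstacle is the quantitative per-scale estimate in the presence of the three-arm conditioning. That conditioning prescribes two open arms and a closed dual arm emanating from $v$ with specific local geometric properties, and the scale-$k$ detour must be globally consistent with this arm structure: the open detour arc must attach to the two open arms above $l_n$, and the closed dual shield must glue onto the closed dual arm below. Proving the $c/k$ lower bound therefore requires a careful generalized FKG (and, when discarding bad overlaps between different $D_k$, Reimer/BK) argument combined with a sharp rather than crude use of separation of arms. It is the sharpness here, and the delicate bookkeeping of what ``matching'' costs one at scale $k$, that both yields the explicit threshold $1/4$ and accounts for the bulk of the technical work.
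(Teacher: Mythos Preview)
This survey does not give a proof of the theorem; it only remarks that \cite{chem2} uses a \emph{modified} detour event, which already signals that the argument is not a direct quantification of the qualitative $\epsilon$-shielded construction with a scale-dependent $\epsilon_n$. Your proposal is exactly such a quantification, and its core step --- the per-scale lower bound $\mathbb{P}(D_k(v)\mid v\in l_n)\geq c/k$ with $c$ close to $1/4$ --- is unsupported. You attribute the $1/k$ to ``the arm-exponent cost of matching the new arms onto the quenched arm structure,'' but separation-of-arms and gluing estimates cost either constant factors or amounts polynomial in the ratio of radii (hence exponential, not reciprocal, in $k$); nothing in that toolkit naturally produces a $1/k$, and no calculation is offered for where $1/4$ enters.

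There is a second structural problem. Your $D_k(v)$ is by definition an $\epsilon_n$-\emph{shielded} detour at scale $k$: it compares the length of the detour arc in the annulus of radii $r_k,2r_k$ to the length of the detoured portion of $l_n$, and that portion runs through the ball of radius $r_k$ about $v$, i.e.\ through all smaller annuli. So $D_k(v)$ is not an annulus event, and quasi-multiplicativity of arm probabilities does not deliver the claimed approximate independence of the $D_k$ across $k$. More basically, at any fixed scale the candidate detour arc and the detoured piece of $l_n$ have comparable typical lengths (both of order $r_k^2\pi_3(r_k)$), so forcing their ratio below $\epsilon_n=(\log n)^{-c_1}$ is itself a shortest-versus-lowest statement at that scale. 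Without a non-circular mechanism for producing this saving at a single scale, the argument has a genuine gap.
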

Of course, one would like to show that $S_n \leq L_n/n^\alpha$ for some $\alpha>0$. One of the main difficulties in this direction is that there is no known way to find a distinguished crossing apart from the highest or lowest crossing.

\subsection{Point-to-set distances}
We finish this note by giving some bounds on chemical distances between points and sets in 2$d$ critical percolation. The first observation is that although point-to-point distances in supercritical percolation are concentrated, those in critical percolation are not. In the statement below, $\{x\leftrightarrow y\}$ is the event that $x$ is connected to $y$ by an open path.
\begin{thm}[Damron-Hanson-Sosoe]
For $e_1 = (1,0)$, one has
\[
\mathbb{E}dist_{chem}(0,e_1)^2 \mathbf{1}_{\{0 \leftrightarrow e_1\}} = \infty.
\]
\end{thm}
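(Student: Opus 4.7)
The plan is to reduce to showing
\[
\sum_{n \geq 1} n \cdot \mathbb{P}(dist_{chem}(0,e_1) \geq n,\, 0 \leftrightarrow e_1) = \infty
\]
via the identity $\mathbb{E}[X^2] \asymp \sum_n n\,\mathbb{P}(X \geq n)$ for nonnegative integer-valued $X$. To do this, I would exhibit, for each $n$, an event $E_n \subseteq \{dist_{chem}(0,e_1) \geq 2n\} \cap \{0 \leftrightarrow e_1\}$ whose probability decays at most polynomially in $n$ with exponent strictly less than $2$.

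The candidate event $E_n$ is the intersection of: (a) the edge $e = \{0,e_1\}$ is closed; (b) there is a closed dual top-to-bottom crossing of $B_n$ passing through the dual edge $e^*$; (c) inside $B_n$, $0$ is connected by an open primal path to the left side of $B_n$, and $e_1$ is likewise connected to the right side; (d) these two open arms are joined by an open arc in the annulus $B_{2n}\setminus B_n$. On $E_n$, the dual crossing in (b) separates $B_n$ into primal components, one containing $0$ and the other $e_1$, so any open path between them staying inside $B_n$ would have to cross a closed primal edge---impossible. Hence the connection forced by (c) and (d) must exit $B_n$, giving chemical distance at least $2n$.

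For the probability bound, I would observe that (a)-(b)-(c) realizes an alternating four-arm event at $e$: two open primal arms (from $0$ leftward and from $e_1$ rightward) and two closed dual arms (from $e^*$ upward and downward), each reaching $\partial B_n$. By quasi-multiplicativity of arm probabilities, the arm-separation lemma, and RSW (used to force each arm to land in the prescribed sector) one obtains $\mathbb{P}((a)\cap(b)\cap(c)) \geq c\,\pi_4(n)$, where $\pi_4(n)$ is the four-arm probability to distance $n$; condition (d) contributes a further positive constant by RSW, independently of the interior. Kesten's scaling relations give $n^2 \pi_4(n) \to \infty$, hence $\pi_4(n) \geq c\,n^{-2+\delta}$ for some $\delta>0$, so $\sum_n n\,\mathbb{P}(E_n) \geq c\sum_n n^{-1+\delta} = \infty$. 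The hardest step will be the lower bound $\mathbb{P}((a)\cap(b)\cap(c)) \geq c\,\pi_4(n)$: the four arms are constrained to specific directions, and gluing them via arm-separation and RSW in the geometry dictated by the dual crossing takes care, particularly because the open arms from $0$ and $e_1$ sit in what are effectively half-planes cut out by the dual path.
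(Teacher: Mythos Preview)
Your proposal is correct and follows essentially the same route as the paper: both arguments recognize that $\{dist_{chem}(0,e_1)\geq cn,\ 0\leftrightarrow e_1\}$ contains (after RSW gluing in an outer annulus) an alternating four-arm event at the edge $\{0,e_1\}$ to scale $n$, and then feed in the standard lower bound $\pi_4(n)\geq c\,n^{-2+\delta}$ to make the second-moment sum diverge. The only cosmetic differences are that the paper works on dyadic scales (defining $d$ as the first $k$ with $0\leftrightarrow e_1$ in $B_{2^k}$) and obtains the $\pi_4$ lower bound via the exact five-arm exponent plus Reimer's inequality rather than invoking Kesten's relations; note that ``$n^2\pi_4(n)\to\infty$'' alone does not literally yield a power-law lower bound, so you should cite the five-arm/Reimer argument (or the polynomial one-arm decay) to get the $\delta>0$ you use.
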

\begin{proof}[Sketch of proof]
Let $d$ be the smallest $k$ such that $0$ is connected to $e_1$ in the box $B_{2^k}$. Since these points are not connected in $B_{2^{k-1}}$, there must be a closed dual path separating them in this box. This closed dual path must connect the boundary of the box to the edge dual to $\{0,e_1\}$, back to the boundary. Since $0$ is connected to $e_1$ in $B_{2^k}$, there must be an open path connecting 0 to the boundary of this box, and a disjoint open path connecting $e_1$ to the same boundary. Using standard gluing arguments and a generalization of the FKG inequality, one can thus show that
\[
\mathbb{P}(d = k) \asymp \pi_4(2^{k-1}),
\]
where $\pi_4(2^{k-1})$ is the ``four-arm'' event that 0 and $e_1$ have two disjoint open paths to $\partial B_{2^{k-1}}$ and $\{0,e_1\}^*$ has two disjoint closed dual paths to the same distance. (The open and closed arms alternate.) By bounding this below by a similar ``five-arm'' event, whose exact asymptotic is known, and using Reimer's inequality, one has $\mathbb{P}(d = k) \geq C 2^{-2k+\epsilon}$ for some $\epsilon>0$. However, on $\{d=k\}$, $dist_{chem}(0,e_1) \geq C2^k$. Thus the expectation is bounded below by
\[
C \sum_k 2^{2k} \pi_4(2^{k-1}) \geq C \sum_k 2^{2k} 2^{-2k + \epsilon} = \infty.
\]
\end{proof}

Last, we state a point-to-point and point-to-set asymptotic.
\begin{thm}
There exists $C>0$ such that for all $n$,
\[
\mathbb{E}[dist_{chem}(0,\partial B_n) \mid A_n] \leq C n^2 \pi_3(n),
\]
where $A_n$ is the event that $0$ is connected to $\partial B_n$. Furthermore, there exists $c>0$ such that for all $x,y \in \mathbb{Z}^2$ and $\lambda>0$,
\[
\mathbb{P}(dist_{chem}(x,y) > \lambda |x-y|^2 \pi_3(|x-y|) \mid x \leftrightarrow y) \leq \lambda^{-c}.
\]
\end{thm}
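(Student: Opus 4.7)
The plan is to bound the point-to-set first moment via a canonical-path construction in the spirit of Morrow--Zhang, and then derive the point-to-point tail bound by combining the analogous first-moment estimate with Markov's inequality.

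\textbf{Point-to-set bound.} On the event $A_n$, fix a deterministic choice of open path $\gamma_n$ from $0$ to $\partial B_n$---for concreteness, the leftmost such path inside the cluster of $0$ (produced by an interface exploration). Since $dist_{chem}(0,\partial B_n) \leq |\gamma_n|$, it suffices to bound
\[
\mathbb{E}[|\gamma_n|\,\mathbf{1}_{A_n}] = \sum_{v \in B_n} \mathbb{P}(v \in \gamma_n).
\]
Each vertex $v \in \gamma_n$ carries an arm structure: two disjoint open arms from $v$ (one terminating at $0$, one at $\partial B_n$) together with a closed dual arm separating $\gamma_n$ locally from its left side. Setting $r_v := \min(|v|, n-|v|)$, this decomposes into a three-arm event at $v$ up to scale $r_v$ (probability $\pi_3(r_v)$) plus two one-arm extensions from scale $r_v$ out to the two endpoints. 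The extension terminating at the single vertex $0$ costs an additional $\pi_1$ factor compared to an extension to a macroscopic wall---a reflection of the identity $\mathbb{P}(0 \leftrightarrow v) \asymp \pi_1(|v|)^2$. Quasi-multiplicativity of arm probabilities, together with the appropriate half-plane estimates near $0$ and $\partial B_n$, should then yield the bulk bound $\mathbb{P}(v\in\gamma_n) \leq C\, \pi_3(n)\,\pi_1(n)$. Summing gives
\[
\mathbb{E}[|\gamma_n|\,\mathbf{1}_{A_n}] \leq C n^2 \pi_3(n)\,\pi_1(n) = C n^2 \pi_3(n)\,\mathbb{P}(A_n),
\]
and dividing by $\mathbb{P}(A_n)$ delivers the first claim.

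\textbf{Point-to-point tail.} A parallel canonical-path argument, with $\partial B_n$ replaced by a small neighborhood of $y$, gives
\[
\mathbb{E}[dist_{chem}(x,y) \mid x \leftrightarrow y] \leq C|x-y|^2 \pi_3(|x-y|).
\]
Markov's inequality applied to this conditional first moment produces the tail bound $C/\lambda$, which implies the claim with any exponent $c \in (0,1]$.

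\textbf{Main obstacle.} The delicate step is justifying the arm count. In the Morrow--Zhang setting the closed dual arm has a natural target---the bottom of the box---whereas for a path from a point to a set there is no such anchor, and the dual arm must be produced geometrically from the extremality of $\gamma_n$ (a genuinely leftmost open path forces a dual closed ``fence'' along its left side). A second subtle point is the correct bookkeeping of the extra $\pi_1$ factor arising from pinning one arm endpoint at the single vertex $0$; this factor is precisely what turns a naive $n^2\pi_3(n)/\pi_1(n)$ into the desired $n^2\pi_3(n)$ after conditioning. A robust fallback, should the canonical-path arm count prove too fragile, is a scale-by-scale approach: decompose $B_n$ into dyadic annuli around $0$ and apply a local Morrow--Zhang-type estimate to the portion of the shortest open path inside each annulus.
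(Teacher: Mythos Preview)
Your overall strategy---bound $dist_{chem}(0,\partial B_n)$ by the length of a distinguished open path whose vertices carry a three-arm structure, then sum---is exactly the paper's. The gap is in the construction of the path. Your ``leftmost open path from $0$ to $\partial B_n$'' is only well-defined relative to a reference closed dual path, and the closed dual arm you extract from extremality needs a macroscopic target. The paper handles this by an explicit case split: if there exists a closed dual path $c$ from a dual neighbor of $0$ to $\partial B_n$, one takes the open path closest to the left of $c$; then each $v$ on that path has a closed dual arm to $c$, and following $c$ gives a closed arm to $\partial B_n$. If no such $c$ exists, however, your construction breaks down---the exploration has nothing to hug, and any closed dual arm produced by extremality can only wind back toward $0$ rather than reach $\partial B_n$. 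In that case the paper builds the path differently: the absence of $c$ forces open circuits around $0$ in $B_n$, and one links successive circuits by smaller-scale extremal paths. Points on this linked path do \emph{not} have three arms to distance $n$, only to the local annular scale, so the arm-counting requires additional work. Your ``robust fallback'' of dyadic annuli is thus not a fallback at all but the second half of the actual proof, and it is the more delicate half.

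On the point-to-point tail: the paper does not describe this argument, so there is nothing to compare against directly. Note, though, that a bare Markov bound gives $C/\lambda$, which is not literally $\le \lambda^{-c}$ for all $\lambda>0$ once $C>1$; you should either absorb the constant into the threshold (restating for $\lambda$ large) or appeal to something slightly stronger than a single first-moment bound.
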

The above results say that the chemical distance between points is unlikely to be larger than $n^2\pi_3(n)$, which is the order of the expected length of the lowest crossing of $B_n$. Because $S_n \leq L_n$ (recall this notation from last section), similar bounds would be consequences of Morrow-Zhang if they were about the shortest crossing of a box. In the point-to-point or point-to-set case, there is no simple replacement for the lowest crossing of a box. So, for example, the proof of the first statement proceeds by bounding the chemical distance above (on the event $A_n$) by the length of some other distinguished path, each of whose vertices still has three arms. If there is, in addition to the open path, a closed dual path $c$ connecting a neighbor of the origin to $\partial B_n$, one can choose for the distinguished path the open path closest to the ``left'' side of $c$. Each point $v$ on this path must have an open arm to 0 and a disjoint open arm to $\partial B_n$. Furthermore, by extremality and duality, there must be a separate closed dual path from a dual neighbor of $v$ to $c$. Following the union of these closed dual paths from $v$ to the boundary $\partial B_n$, one obtains a ``closed arm'' from $v$. In the case that there is no closed dual path from a dual neighbor of $0$ to $\partial B_n$, there must be open circuits around 0 in $B_n$, and one links these circuits together using smaller scale extremal paths. Points on this constructed path do not have three arms to the required distance, so more arm-counting is needed.

\medskip
\noindent
{\bf Acknowledgement.} I thank J. Hanson for comments on a previous draft.

\end{document}